\documentclass[10pt]{amsart}
\usepackage{amsfonts,amssymb,amscd,amsmath,enumerate,verbatim,calc}
\textwidth=16.00cm
\textheight=23.00cm
\topmargin=0.00cm
\oddsidemargin=0.0cm
\evensidemargin=0.0cm
\headheight=14.4pt
\headsep=1cm
\hyphenation{semi-stable}
\emergencystretch=10pt

\newtheorem{thm}{Theorem}[section]
\newtheorem{cor}[thm]{Corollary}

\newtheorem{prop}[thm]{Proposition}

\newtheorem{rem}[thm]{Remark}
\newtheorem{rem and exam}[thm]{Remark and Example}
\newtheorem{rem and exams}[thm]{Remark and Examples}

 
\DeclareMathOperator{\Ext}{Ext} \DeclareMathOperator{\Supp}{Supp}
\DeclareMathOperator{\V}{V} \DeclareMathOperator{\W}{W}
\DeclareMathOperator{\Hom}{Hom}

\DeclareMathOperator{\lc}{H} 
 
\DeclareMathOperator{\G}{\Gamma} 
 
 \DeclareMathOperator{\Mod}{Mod}

\DeclareMathOperator{\Ass}{Ass}

\newcommand{\fa}{\mathfrak{a}}
\newcommand{\fb}{\mathfrak{b}}

\newcommand{\fm}{\mathfrak{m}}
\newcommand{\fp}{\mathfrak{p}}

\newcommand{\lo}{\longrightarrow}

\bibliographystyle{amsplain}
\begin{document}

\title[Vanishing, Finiteness and Artinianness]
 {Some results on the local cohomology modules
 with respect to a pair of ideals}

\bibliographystyle{amsplain}

   \author[Aghapournahr]{M. Aghapournahr$^{1}$}
\address{$^{1}$ Department of Mathematics, Faculty of Science, Arak University,
Arak, 38156-8-8349, Iran.}
\email{m-aghapour@araku.ac.ir}

   \author[Ahmadi]{Kh. Ahmadi-amoli$^{2}$}
\address{$^{2}$ Department of Mathematics, Payame Noor University, Tehran,
 19395-3697, Iran.}
\email{khahmadi@pnu.ac.ir}

    \author[Sadeghi]{M. Y. Sadeghi$^{3}$}
\address{$^{3}$ Department of Mathematics, Payame Noor University, Tehran,
 19395-3697, Iran.}
\email{m.sadeghi@phd.pnu.ac.ir}

\thanks{ 2010 {\it Mathematics Subject Classification}:13D45, 13E05, 14B15.\\}
\keywords{local cohomology modules defined by a pair of ideals, local cohomology,
Serre subcategory, associated primes, ZD-modules.}


\begin{abstract}
Some affirmative answers are given to
Huneke$^,$s problems. Membership of
$\Hom\big(R/I,$\\$\lc^{i}_{I}(M)\big)$
in an arbitrary Serre class
and so finiteness of
$\Ass\big(\lc^{i}_{I}(M)\big)$ are shown.
Let $\lc^{i}_{I,J}(M)=0$ for all $i<t$ and
$\fa\in\tilde{W}(I,J)$. It is shown that
$\lc^{i}_{\fa}(M)\subseteq\lc^{i}_{I,J}(M)$ for
all $i\leq t$, and so $\lc^{i}_{\fa}(M)=0$ for all
$i<t$. Also, it is proved that if $M$ and
$\lc^{i}_{\fa}(M)$ are finite for all $i<t$, then
for any ideal $\fa\in\tilde{W}(I,J)$ such that $\fa\subseteq\sqrt{(0:\lc^{t}_{I,J}(M))}$,
the $R$-module $\lc^{t}_{\fa}(M)$ is finite.
The calculation of local cohomology
modules with respect to an arbitrary pair of
ideals $I,J$ can be reduced to calculation of
local cohomology modules with respect to a
pair of ideals whose first ideal is generated
by any $k$-regular sequence in $I$.
\end{abstract}

\maketitle

\section{\textbf{Introduction}}
Let $R$ be a commutative Noetherian ring,
$I , J$ be two ideals of $R$, $t\in\mathbb{N}_{0}$, and $M$ be
an $R$-module.
The concept of local cohomology modules with
respect to a pair of ideals introduced by Takahashi,
Yoshino, and Yoshizawa \cite{TakYoYo}
as a generalization of local cohomology modules
on an ideal (the ordinary ones).
As Huneke gave in \cite{Hu} a survey of some
important problems on finiteness, vanishing
and Artinianness of local cohomology modules,
comparing these two kinds of local cohomology
modules is important.

It is known that the finiteness of
$\Ass\Hom\big(R/I,\lc^{i}_{I}(M)\big)$
is one of the sufficient conditions for finiteness of
$\Ass\big(\lc^{i}_{I}(M)\big)$
(\cite{BruHer}, Exercise 1.2.27).
Also, if $\Hom\big(R/I,\lc^{i}_{I}(M)\big)$ is
finite, Artinian, or minimax, then
$\Ass\big(\lc^{i}_{I}(M)\big)$ is finite.
Since the classes of finite, Artinian, and minimax
modules are Serre classes, the
membership of $\Hom\big(R/I,\lc^{i}_{I}(M)\big)$
in a Serre class $\mathcal{S}$ is always useful.

In Section 2,
we give some further contributions to verify the
membership of $\Hom\big(R/I,\lc^{t}_{I}(M)\big)$
in an arbitrary Serre class $\mathcal{S}$
(see for example Proposition \ref{2.1} and
Corollary \ref{2.3}).
As an important result of this section,
we give an affirmative answer to Huneke$^,$s
problem about finiteness of associated primes
of local cohomology modules and also a generalization
of \cite[Exercise 7.1.4 and Theorem 7.1.3]{BroSh}.

In Section 3, we show that vanishing of local
cohomology modules with respect to a pair of
ideals affects the vanishing of local cohomology
modules on an ideal. In fact, if
$\lc^{i}_{I,J}(M)=0$
for all $i<t$, then $\lc^{i}_{\fa}(M)\subseteq
\lc^{i}_{I,J}(M)$ for all $\fa\in\tilde{W}(I,J)$
and all $i\leq t$ (specially for $\fa = I$) and so
$\lc^{i}_{\fa}(M)=0$ for all $\fa\in\tilde{W}(I,J)$
and all $i<t$ (see Theorem \ref{3.4}).
In this situation, if $\fa\in\tilde{W}(I,J)$ be
an ideal such that
$\fa\subseteq\sqrt{(0:\lc^{t}_{I,J}(M))}$, then
$\lc^{t}_{\fa}(M)$ is finite (see Corollary \ref{3.7}).
Huneke$^,$s second problem is about the
finiteness of local cohomology modules.
It is known that if $\fa$ is an ideal of $R$ and
$M$ is a finite $R$-module, a necessary and sufficient
condition for the finiteness of
$\lc^{i}_{\fa}(M)$ for all $i<t$ is $\fa\subseteq\sqrt{(0:\lc^{i}_{\fa}(M))}$ for all $i<t$
(see \cite[Proposition 9.1.2]{BroSh}.
In Corollary \ref{3.13}, we show that if $M$ and
$\lc^{i}_{I,J}(M)$ are finite for all $i<t$, then for an ideal
$\fa\in\tilde{W}(I,J)$ a sufficient condition for the finiteness of
$\lc^{t}_{\fa}(M)$ is that ${\fa}$ satisfies the condition
$\fa\subseteq\sqrt{(0:\lc^{t}_{\fa}(M))}$.
In \cite[Lemma 3.4]{NagSch}, Nagel and Schenzel
showed that for a finite module $M$ over the local
ring $(R,\fm)$ and any positive integer $r$,
$\lc^{i}_{\fm}(M)\cong\lc^{i}_{\fa}(M)$ for all $i<r$,
where $\fa$ is the ideal generated by an $M$-filter
regular sequence $a_1 , a_2 , ... , a_r$.
As the last theorem of this note, we present a generalization of this theorem
in a noetherian ring (not necessary local) for local
cohomology modules with respect to a pair of ideals, by
using the concept of $k$-regular sequences \cite{AhSan}
(see Theorem \ref{3.14}).


\section{\textbf{Finiteness And Artinianness}}

In this section, we study the membership of
$\Hom_R\big(R/\fa,\lc^{t}_{I,J}(M)\big)$ in an arbitrary
Serre class
$\mathcal{S}$ and Artinianness of $\lc^{i}_{\fa}(M)$
for an ideal $\fa$ of $R$.
Recall that a class
$\mathcal{S}$ of $R$-modules is a Serre
subcategory of $R$-modules, if it is
closed under taking submodules, quotients
and extensions.
By ${\Mod}(R)$ and $\mathcal{S}$,
we mean the category
of $R$-modules and $R$-homomorphisms
and an arbitrary Serre subcategory of R-modules, respectively.


\begin{prop} \label {2.1}
Let $\fa\in\tilde{W}(I,J)$ and let $\emph{G : Mod}(R)\rightarrow\emph{Mod}(R)$
be a left exact covariant functor such that $\big( 0 :_X\fa\big) =
\big(0:_{\emph{G}(X)}\fa\big)$ for any $R$-module $X$ and $\emph{G}(E)$
be an injective $R$-module for any injective $R$-module $E$.
Consider the natural homomorphism
$$\psi:\Ext^t_R\big(R/\fa,M\big)\longrightarrow \Hom_R\big(R/\fa,\emph{G}^t(M)\big).$$
\begin{enumerate}
  \item[\rm{(}i\rm{)}] If $\Ext^{t-j}_R\big(R/\fa,\emph{G}^{j}(M)\big)\in\mathcal{S}$
  for all $j<t$, then \emph{Ker} $\psi\in\mathcal{S}$.

  \item[\rm{(}ii\rm{)}] If $\Ext^{t+1-j}_R\big(R/\fa,\emph{G}^{j}(M)\big)\in\mathcal{S}$
  for all $j<t$, then \emph{Coker} $\psi\in\mathcal{S}$.

  \item[\rm{(}iii\rm{)}] If $\Ext^{n-j}_R\big(R/\fa,\emph{G}^{j}(M)\big)\in\mathcal{S}$
  for $n=t,~t+1$ and all $j<t$, then \emph{Ker} $\psi$ and \emph{Coker} $\psi$ both
  belong to $\mathcal{S}$. Thus $\Ext^t_R\big(R/\fa,M\big)\in\mathcal{S}~$  iff
  $~\Hom_R\big(R/\fa,\emph{G}^{t}(M)\big)\in\mathcal{S}$.

\end{enumerate}
\begin{proof}
Let $\textmd{F} (-) = \Hom_R\big(R/\fa,-\big)$.
Then for any $R$-module $X$ we have, $\textmd{FG} (X) = \textmd{F} (X)$.
Now, the assertion follows from \cite[Proposition 3.1]{AghMel1}.
\end{proof}
\end{prop}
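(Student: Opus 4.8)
The plan is to realize $\psi$ as an edge homomorphism of a Grothendieck spectral sequence and then read off $\Ker\psi$ and $\Coker\psi$ from the associated filtration. First I would record the structural fact that the hypotheses really encode: writing $F(-)=\Hom_R(R/\fa,-)$, the canonical identification $\Hom_R(R/\fa,X)=(0:_X\fa)$ together with $\fa\in\tilde W(I,J)$ and the assumption $(0:_X\fa)=(0:_{G(X)}\fa)$ gives $FG(X)=F(X)$ for every $X$, naturally, i.e. $F\circ G=F$. Since $G$ is left exact and sends injectives to injectives and $F$ is left exact, the composite-functor spectral sequence applies, and because $F\circ G=F$ its abutment collapses to the derived functors of $F$; thus one obtains a first-quadrant cohomological spectral sequence
$$E_2^{p,q}=\Ext^p_R\big(R/\fa,G^q(M)\big)\ \Longrightarrow\ \Ext^{p+q}_R\big(R/\fa,M\big),$$
which converges with a finite filtration on each $\Ext^n_R(R/\fa,M)$. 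The natural homomorphism $\psi$ of the statement is precisely the bidegree-$(0,t)$ edge map, namely the composite $\Ext^t_R(R/\fa,M)\twoheadrightarrow E_\infty^{0,t}\hookrightarrow E_2^{0,t}=\Hom_R(R/\fa,G^t(M))$.

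Granting this identification, (i)--(iii) are bookkeeping with the spectral sequence, using only that a Serre subcategory $\mathcal S$ is closed under subobjects, quotients, and extensions. For (i): $\Ker\psi$ is the first filtration step $F^1\Ext^t_R(R/\fa,M)$, which carries a finite filtration whose successive quotients are the $E_\infty^{p,t-p}$ with $1\le p\le t$; each such is a subquotient of $E_2^{p,t-p}=\Ext^{t-j}_R(R/\fa,G^{j}(M))$ with $j=t-p$ ranging over $0\le j\le t-1$, and these lie in $\mathcal S$ by assumption, so $\Ker\psi\in\mathcal S$. For (ii): $\Coker\psi=E_2^{0,t}/E_\infty^{0,t}$, and since $E_{r+1}^{0,t}=\Ker\big(d_r\colon E_r^{0,t}\to E_r^{r,t-r+1}\big)$, the successive quotients of the chain $E_2^{0,t}\supseteq E_3^{0,t}\supseteq\cdots\supseteq E_\infty^{0,t}$ embed into $E_r^{r,t-r+1}$, a subquotient of $E_2^{r,t-r+1}=\Ext^{t+1-j}_R(R/\fa,G^{j}(M))$ with $j=t+1-r\le t-1$; these are exactly the modules assumed to be in $\mathcal S$, so $\Coker\psi\in\mathcal S$. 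Part (iii) is the conjunction, and the final equivalence follows from the two short exact sequences $0\to\Ker\psi\to\Ext^t_R(R/\fa,M)\to\Image\psi\to 0$ and $0\to\Image\psi\to\Hom_R(R/\fa,G^t(M))\to\Coker\psi\to 0$ together with closure of $\mathcal S$ under subquotients and extensions.

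The one genuinely delicate point is the identification of the ``natural homomorphism'' $\psi$ with the $(0,t)$-edge map above. If $\psi$ is instead defined directly from an injective resolution $E^\bullet$ of $M$ (using that $G(E^\bullet)$ is again a complex of injectives and that $F(G(E^\bullet))=F(E^\bullet)$ by $F G=F$), this amounts to the standard but slightly fiddly comparison of the edge map with the map induced on cohomology; alternatively one can bypass spectral sequences and compute $\Ker\psi$ and $\Coker\psi$ as subquotients built from $\Ext^{>0}_R(R/\fa,-)$ of the truncations of $G(E^\bullet)$, at the cost of tracking indices by hand. This is precisely the content packaged in \cite[Proposition 3.1]{AghMel1} applied to $F=\Hom_R(R/\fa,-)$, for which $FG=F$; so in the write-up I would simply verify that hypothesis and cite that proposition, which is what keeps the proof short.
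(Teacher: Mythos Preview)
Your proposal is correct and takes essentially the same approach as the paper: verify that $F\circ G=F$ for $F=\Hom_R(R/\fa,-)$ using the hypothesis $(0:_X\fa)=(0:_{G(X)}\fa)$, and then invoke \cite[Proposition 3.1]{AghMel1}. The paper's proof is exactly this two-line reduction, so your additional unpacking of the spectral-sequence content of that proposition is simply an expansion of what the citation encapsulates rather than a different route.
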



From the following proposition which is
obtained by definitions, we get a result
about membership of
$\Hom\big(R/a,\lc^{j}_{I,J}(M)\big)$ in $\mathcal{S}$
followed by some corollaries.

\begin{prop} \label{2.2}
Let $\fa\in\tilde{W}(I,J)$ and $X$
be an $R$-module.
Then $$\big( 0 :_X\fa\big)
= \big(0:_{\Gamma_{\fa}(X)}\fa\big)
=\big(0:_{\Gamma_{\fa,J}(X)}\fa\big)
= \big(0:_{\Gamma_{I,J}(X)}\fa\big).$$

In particular, for any ideal
$\fb$ that contains $I^n$ for some $n$ we have
\begin{enumerate}

  \item[\rm{(}i\rm{)}] $\big( 0 :_X\fb\big)
  = \big(0:_{\Gamma_{\fb}(X)}\fb\big)
  = \big(0:_{\Gamma_{\fb,J}(X)}\fb\big)
  = \big(0:_{\Gamma_{I,J}(X)}\fb\big)
  = \big(0:_{\Gamma_{I}(X)}\fb\big)$,

  \item[\rm{(}ii\rm{)}] $\big(0:_X\fb\big)
  = \big(0:_{\Gamma_{\fb,J}(X)}\fb\big)
  \subseteq \big(0:_{\Gamma_{\fb,J}(X)}I^m\big)
  \subseteq \big(0:_{\Gamma_{I,J}(X)}I^m\big)
  = \big( 0 :_{X}I^m\big)$ for some $m\in\mathbb{N}_0$.
\end{enumerate}
\end{prop}







\begin{cor}\label{2.3}
Let $\fa\in\tilde{W}(I,J)$ be such that
$\Ext^t_R\big(R/\fa,M\big)$,
$\Ext^{n-j}_R\big(R/\fa,
  \lc^{j}_{I,J}(M)\big)\in\mathcal{S}$
  for $n=t,~t+1$ and all $j<t$, then
  $~\Hom_R\big(R/\fa,\lc^{t}_{I,J}(M)\big)\in\mathcal{S}$.

\begin{proof}
Let $\textmd{G}(-)=\G_{I,J}(-)$ and use
Propositions \ref{2.1} and \ref{2.2}.
\end{proof}
\end{cor}


Using various Serre subcategories
of $R$-modules in Propositions \ref{2.1} and \ref{2.2},
we can obtain the following results.


\begin{cor}\label{2.4}
Let $\fa\in\tilde{W}(I,J)$ be such that
$\Ext^{n-j}_R\big(R/\fa,
  \lc^{j}_{I,J}(M)\big)=0$
  for $n=t,~t+1$ and all $j<t$, then
  $\Ext^t_R\big(R/\fa,M\big)\cong
  \Hom_R\big(R/\fa,\lc^{t}_{I,J}(M)\big)$.
\end{cor}


\begin{cor}\label{2.5}
Let $\fa\in\tilde{W}(I,J)$ be such that
$\Ext^{n-j}_R\big(R/\fa,
  \lc^{j}_{I,J}(M)\big)$ is finite
  for $n=t,~t+1$ and all $j<t$, then
  $\Ext^t_R\big(R/\fa,M\big)$ is finite
  if and only if
  $\Hom_R\big(R/\fa,\lc^{t}_{I,J}(M)\big)$
  is finite.
  In particular, if $M$ is finite, then
  $\Hom_R\big(R/\fa,\lc^{t}_{I,J}(M)\big)$
  is finite.
\end{cor}


\begin{cor}\label{2.6}
Let $\fa\in\tilde{W}(I,J)$ be such that
$\Ext^{n-j}_R\big(R/\fa,
  \lc^{j}_{I,J}(M)\big)$ is Artinian
  for $n=t,~t+1$ and all $j<t$, then
  $\Ext^t_R\big(R/\fa,M\big)$ is Artinian
  if and only if
  $\Hom_R\big(R/\fa,\lc^{t}_{I,J}(M)\big)$
  is Artinian.
\end{cor}

\section{\textbf{Vanishing and Anihilator of local cohomology modules}}

In this section we compare vanishing of
local cohomology modules with respect to a
pair of ideals with the vanishing of local
cohomology modules with respect to an ideal.
We begin with the following remark.


\begin{rem} \label{3.1}
\emph{ Let $\big(\Lambda,\leq\big)$ be a
(non-empty) directed partially ordered set.
By an inverse family of ideals of $R$ over
$\Lambda$, we mean a family
$\big({\fb}_\alpha\big)_{\alpha\in\Lambda}$
of ideals of $R$ such that,
whenever $(\alpha,\beta)\in\Lambda\times\Lambda$
with $\alpha\geq\beta$,
we have ${\fb}_\alpha\subseteq{\fb}_\beta$.
For the inverse family of
ideals $\mathcal{B}=\big({\fb}_\alpha\big)_
{\alpha\in\Lambda}$,
we shall denote $\lc^{i}_{\mathcal{B}}(M):
={\varinjlim_{\alpha\in\Lambda}}\Ext^i_R
\big(R/{\fb_\alpha},M\big)$, for all
$i\in\mathbb{N}_0$
(see \cite[1.2.10, 1.2.11, and 2.1.10]{BroSh}).}
\end{rem}


\begin{prop} \label{3.2}
Let $\mathcal{B}=\big({\fb}_\alpha\big)_
{\alpha\in\Lambda}$ be an inverse family
of elements of $\tilde{W}(I,J)$ over
$\Lambda$.
Let $\Ext^{n-j}_R\big(R/{\fb_\alpha},
\lc^{j}_{I,J}(M)\big)=0$ for
any ${\fb_\alpha}\in\mathcal{B}$,
$n=t,~t+1$, and for all $j<t$.
Then $\lc^t_{\mathcal{B}}(M)\subseteq
\lc^t_{I,J}(M)$.
\begin{proof}
By Propositions \ref{2.1} and \ref{2.2},
for $\mathcal{S}=0$ and
$\textmd{G} (-) = \Gamma_{I,J}(-)$,
we have
$\Ext^t_R\big(R/{\fb_\alpha},M\big)\cong\Hom_R
\big(R/{\fb_\alpha},\lc^t_{I,J}(M)\big)$
for any ${\fb_\alpha}\in\mathcal{B}$.
Then, by
\cite[Theorem 1.2.11 and Remarks 1.3.7]{BroSh},
we get
$\lc^t_{\mathcal{B}}(M)\cong\Gamma_{\mathcal{B}}
(\lc^t_{I,J}(M))
\subseteq\lc^t_{I,J}(M)$, as required.
\end{proof}
\end{prop}


\begin{cor} \label{3.3}
Let $\lc^i_{I,J}(M)=0$ for all $i<t$.
Then $\lc^i_{\fb}(M)\subseteq\lc^i_{I,J}(M)$
for any $\fb\in\tilde{W}(I,J)$
and all $i\leq t$, and so $\lc^i_{\fb}(M)=0$
for all $i<t$. In particular,
when $J=0$.
\begin{proof}
Apply Proposition \ref{3.2} to
$\mathcal{B}=\{\fb^n\}_{n\geq1}$ .
\end{proof}
\end{cor}


The next theorem is the other main
theorem in this section which is used
in some results of this paper.
Recall that an $R$-module $M$ is said
to be ZD-module, if for every
submodule $N$ of $M$, the set of
 zero-divisors of $M/N$ is a union of
finitely many prime ideals in Ass$(M/N)$,
(see \cite{Ev, DivEsm}).


\begin{thm} \label {3.4}
Let $\lc^{i}_{I,J}(M)=0$ for all $i<t$.
Then the following statements hold for any
$\fa\in\tilde{W}(I,J)$.
\begin{enumerate}

  \item[\rm{(}i\rm{)}]
  $\Ext^t_R\big(R/\fa,M\big)~\cong~
  {\Hom_R\big(R/\fa,\lc^{t}_{\fa}(M)\big)}$
  \item[] \ \ \ \ \ \ \ \ \ \ \ \ \ \ \
  \ \ \ \
  $~\cong~\Hom_R\big(R/\fa,\lc^{t}_{\fa,J}(M)\big)$
  \item[] \ \ \ \ \ \ \ \ \ \ \ \ \ \ \ \
   \ \ \
  $~\cong~\Hom_R\big(R/\fa,\lc^{t}_{I,J}(M)\big)$.

  \item[\rm{(}ii\rm{)}] $\Gamma_{\fa}
      \big(\lc^{t}_{\fa}(M)\big)\cong\lc^{t}_{\fa}(M)\cong
      \Gamma_{\fa}\big(\lc^{t}_{\fa,J}(M)\big)\cong
      \Gamma_{\fa}\big(\lc^{t}_{I,J}(M)\big)$.

  \item[\rm{(}iii\rm{)}] $\lc^{i}_{\fa}(M)\subseteq
  \lc^{i}_{\fa,J}(M)\subseteq
  \lc^{i}_{I,J}(M)$
    \ \ \ \ \ \ \ \ \ \ \ for all $i\leq t$.

  \item[\rm{(}iv\rm{)}] $\lc^{i}_{\fa,J}(M)
  =\lc^{i}_{\fa}(M)=0$
  \ \ \ \ \ \ for all $i<t$.

  \item[\rm{(}v\rm{)}] ${\Ass}\big(\lc^{i}_{\fa}(M)\big)=
  {\Ass}\big(\lc^{i}_{\fa,J}(M)\big)\cap{\V}(\fa)$
  \item[] \ \ \ \ \ \ \ \ \ \ \ \ \ \ \ \ \
  $={\Ass}\big(\lc^{i}_{I,J}(M)\big)\cap{\V}(\fa)$
  for all $i\leq t$.

 \item[\rm{(}vi\rm{)}] If $\fa\neq0$ and $M$ is a
 \emph{ZD}-module,
  then there exists a regular $M$-sequence of length
  $~t$ contained in $\fa$.

\end{enumerate}
\end{thm}
\begin{proof}

(i) Let $\textmd{G}_1(-)=\Gamma_{\fa}(-)$ ,
$\textmd{G}_2(-)=\Gamma_{\fa,J}(-)$,
$\textmd{G}_3(-)=\Gamma_{I,J}(-)$, and
$\mathcal{S}=0$. Now,
the assertion follows from
Propositions \ref{2.1} and \ref{2.2} and (iv).

(ii) Since $\fa\in\tilde{W}(I,J)$ implies
that ${\fa^n}\in\tilde{W}(I,J)$
for any $n\in\mathbb{N}$, the result follows from (i).

(iii), (iv) Apply Corollary \ref{3.3}
and \cite[Theorem 3.2]{TakYoYo}.

(v) It is obvious by part (i) and
\cite[Exercise 1.2.27]{BruHer}.

(vi) First note that $\lc^{i}_{\fa}(M)=0$
for all $i<t$, by (vi).
By induction on $t$, we construct a regular
$M$-sequence $x_1, x_2, \cdots, x_t$
such that $x_j\in\fa$ for $j=1,2,\cdots,t$.
When $t=0$, there is nothing to prove.
Let $t=1$. Then $\Gamma_{\fa}(M)=0$.
Since $M$ is ZD-module and $\fa$-torsion free,
by the prime Avoidance Theorem, $\fa$ is not
contained in Zd$(M)$.
So, there exists an element $x_1\in\fa$
such that $x_1\not\in$ Zd$(M)$. This proves
the case $t=1$. Now, let $t>1$ and the
assertion be true for $t-1$.
By the above observation, $\fa$ contains an
element $x_1$
which is a non-zerodivisor on $M$. Considering
the exact sequence
$$\lc^{j}_{\fa}(M)\rightarrow\lc^{j}_{\fa}(M/x_1M)
\rightarrow\lc^{j+1}_{\fa}(M),$$
for all $j\in\mathbb{N}_0$, we obtain $\lc^{j}_
{\fa}(M/x_1M)=0$
for all $j<t-1$. Now, by inductive hypothesis,
there is a regular $M/x_1M$-sequence
$x_2, x_3, \cdots , x_t$ such that $x_j\in\fa$
for all $j=2,3,\cdots,t $.
Therefore $x_1, x_2, \cdots , x_t$ is a regular
$M$-sequence.
\end{proof}


It is known that if $\lc^{i}_{\fa}(M)=0$
for all $\fa\in\tilde{W}(I,J)$, then $\lc^{i}_{I,J}(M)=0$
\cite[Theorem 3.2]{TakYoYo}.
In the following we show that the converse of this result is true too, in some situation.

\begin{cor} \label {3.5}
$\lc^{i}_{I,J}(M)=0$ for all $i<t$,
if and only if $\lc^{i}_{\fa}(M)=0$ for any
$\fa\in\tilde{W}(I,J)$
and all $i<t$.
\begin{proof}
The assertion follows easily from Theorem
\ref{3.4} (iv)
 and \cite[Theorem 3.2]{TakYoYo}.
\end{proof}
\end{cor}


\begin{cor} \label {3.6}
Let $M$ be a finite $R$-module such that~
$\lc^{i}_{I,J}(M)=0$ for all $i<t$.
If $\lc^{t}_{I,J}(M)$ is finite,
then $\fa\subseteq\sqrt{(0:\lc^{t}_{\fa}(M))}$
for any $\fa\in\tilde{W}(I,J)$.
\begin{proof}
The assertion follows easily from Theorem
\ref{3.4} (iii).
\end{proof}
\end{cor}


\begin{cor} \label {3.7}
Let $M$ be a finite $R$-module and
$\fa\in\tilde{W}(I,J)$.
Let
$\lc^{i}_{I,J}(M)=0$ for all $i<t$.
If $\fa\subseteq\sqrt{(0:\lc^{t}_{I,J}(M))}$,
then $\fa\subseteq\sqrt{(0:\lc^{t}_{\fa}(M))}$,
and so $\lc^{t}_{\fa}(M)$ is finite.
\begin{proof}
The assertion follows easily from Theorem
\ref{3.4} (iii) and \cite[Proposition 9.1.2]{BroSh}.
\end{proof}
\end{cor}


\begin{cor} \label{3.8}
Let $\lc^{i}_{I,J}(M)=0$ for all $i<t$.
Then for any $\fa\in\tilde{W}(I,J)$
and any $\fb\in\tilde{W}(I,0)$, we have
\begin{enumerate}

  \item[\rm{(}i\rm{)}] $\Hom_R\big(R/I,
  \lc^{t}_{\fa,J}(M)\big)\subseteq
  \Ext^t_R\big(R/I,M\big)$.

  \item[\rm{(}ii\rm{)}] $\Gamma_{I}\big
  (\lc^{t}_{\fa,J}(M)\big)\subseteq
  \lc^{t}_{I}(M)$.

  \item[\rm{(}iii\rm{)}] $\Hom_R\big
  (R/\fb,\lc^{t}_{I}(M)\big)\cong
  \Ext^t_R\big(R/\fb,M\big)$.

  \item[\rm{(}iv\rm{)}] $\Ext^t_R\big
  (R/\fb,M\big)\subseteq
  \Ext^t_R\big(R/I^m,M\big)$ for some
  $m\in\mathbb{N}$. In particular,
  when $\fb\supseteq I$ the result holds
  for all $m\in\mathbb{N}$,
  and so $\Ext^t_R\big(R/\fb,M\big)\subseteq
  \lc^{t}_{I}(M)$.

  \item[\rm{(}v\rm{)}] $\Gamma_{\fb}\big
  (\lc^{t}_{I}(M)\big)\cong
  \lc^{t}_{\fb}(M)$, and so $\lc^{t}_{\fb}
  (M)\subseteq\lc^{t}_{I}(M)$ and ${\Ass}\big(\lc^{t}_
  {\fb}(M)\big)=
  {\Ass}\big(\lc^{t}_{I}(M)\big)\cap{\V}(\fb)$.

\end{enumerate}
\begin{proof}
(i) By Theorem \ref{3.4} (iii),
$\big(0:_{\lc^{t}_{\fa,J}(M)}{I}\big)\subseteq
\big(0:_{\lc^{t}_{I,J}(M)}{I}\big)$.
Now, again apply Theorem \ref{3.4} (i) to $\fa=I$.

(ii) Use part (i).

(iii) Apply Propositions \ref{2.1} and \ref{2.2} for
$\textmd{G}(-)=\Gamma_{I}(-)$ and
$\mathcal{S}=0$.

(iv) Let $m\in\mathbb{N}$ be such that
$I^m\subseteq\fb$.
Then since $\big(0:_{\lc^{t}_{I,J}(M)}
{\fb}\big)\subseteq\big
(0:_{\lc^{t}_{I,J}(M)}{I^m}\big)$,
the assertion follows  from Theorem \ref{3.4} (i).

(v) It is clear by (iii), since
${\fb}^n\in\tilde{W}(I,0)$
for all $n\in\mathbb{N}$.

\end{proof}
\end{cor}


Next, as an application of the above results,
we obtain finiteness
result for $\lc^{t}_{I,J}(M)$ in
Corollary \ref{3.13}. To achieve
this result, we need the following
proposition which easily can be proved by the
same method of \cite[Lemma 3.1]{AghMel2}
for an arbitrary Serre subcategory
$\mathcal{S}$.

\begin{prop}\label{3.9}
Let $\fa$ be an ideal of $R$.
Then $\fa M\in\mathcal{S}$ if and only if
$M/(0:_{M}\fa)\in\mathcal{S}$.

\end{prop}


\begin{cor}\label{3.10}
Let $\fa\in\tilde{W}(I,J)$ be such that
$\Ext^t_R(R/\fa,M)$ and
$\Ext^{n-j}_R\big(R/\fa,\lc^{j}_
{I,J}(M)\big)$ belong to
$\mathcal{S}$ for $n=t,~t+1$, and all $j<t$.
Then $\lc^t_{I,J}(M)\in\mathcal{S}$
if and only if
$\fa\lc^{t}_{I,J}(M)\in\mathcal{S}$.
In particular for $\fa=I$.
\begin{proof}
$(\Rightarrow)$ It is obvious.

$(\Leftarrow)$ The assertion follows
from Proposition \ref{3.9},
Corollary \ref{2.3}
and the short exact sequence
$$0\rightarrow(0:_{\lc^t_{I,J}(M)}\fa)
\rightarrow\lc^t_{I,J}(M)
\rightarrow\lc^t_{I,J}(M)/(0:_{\lc^t_
{I,J}(M)}\fa)\rightarrow0.$$
\end{proof}
\end{cor}


\begin{cor}\label{3.11}
Let $\fa\in\tilde{W}(I,J)$ be such that
$\Ext^t_R(R/\fa,M)$ and
$\Ext^{n-j}_R\big(R/\fa,\lc^{j}_
{I,J}(M)\big)$ are finite
for $n=t,~t+1$, and all $j<t$. Then
$\lc^t_{I,J}(M)$ is finite if and only if
$\fa\lc^{t}_{I,J}(M)$ is finite.

\begin{proof}
Apply Corollary \ref{3.10} to the
class of finite $R$-modules.
\end{proof}
\end{cor}


\begin{cor}\label{3.12}
Let $M$ be a finite $R$-module.
Let $s = \textmd{inf}~\{i\geq0\mid\lc^{i}
_{I,J}(M)~\textmd{is not finite}\}$.
Then ${\fa}^n \lc^{s}_{I,J}(M)$ is not
finite for any $\fa\in\tilde{W}(I,J)$
and all $n\in\mathbb{N}_0$, and so
${\fa}^n \lc^{s}_{I,J}(M)\neq0$.
\begin{proof}
Let $\fa\in\tilde{W}(I,J)$
and $n\in\mathbb{N}_0$. Since
${\fa}^n\in\tilde{W}(I,J)$ and
$\lc^{i}_{I,J}(M)$ is finite, for all
$i<s$, so the assertion follows from
Corollary \ref{3.11}.
\end{proof}
\end{cor}


\begin{cor}\label{3.13}
Let $M$ be a finite $R$-module
and $\lc^{i}_{I,J}(M)$ be finite for all
$i<t$. If there exists $\fa\in\tilde{W}(I,J)$
such that $\fa\subseteq\sqrt
{(0:\lc^{t}_{I,J}(M))}$, then $\lc^{t}_{I,J}(M)$ is finite.
\begin{proof}
Assume that $\lc^{t}_{I,J}(M)$ is not finite.
So, we get
$t = \textmd{inf}~\{i\geq0\mid\lc^{i}
_{I,J}(M)~\textmd{is not finite}\}$.
Now, by using Corollary \ref{3.12},
we get ${\fa}^n \lc^{t}_{I,J}(M)\neq0$
for all $n\in\mathbb{N}_0$, which is a
contradiction with
$\fa\subseteq\sqrt
{(0:\lc^{t}_{I,J}(M))}$.
\end{proof}
\end{cor}


In local case, it is shown that the local
cohomology with respect to the maximal ideal
of $R$ is concerned with the local
cohomology with respect to an ideal generated
by any filter regular sequence.
(See \cite[Lemma 3.4]{NagSch}).
In Theorem \ref{3.14}, as a final result of
this section,
we obtain this in a Noetherian (not necessary local)
ring for a $k$-regular
$M$-sequence $(k\geq-1)$.\\
For a subset $T$ of Spec$(R)$ and an
integer $i\geq-1$,
we set $$(T)_{>i}:=\{\fp\in T\mid
{\dim}(R/\fp)>i\},$$
$$(T)_{\leq i}:=\{\fp\in T\mid
{\dim}(R/\fp)\leq i\}.$$

Recall that, for an integer $k\geq-1$,
a sequence $a_{1},\ldots,a_{n}$
of elements of $R$ is called a
\textit{poor $k$-regular $M$-sequence}
whenever $a_{i}\notin {\fp}$ for all
$${\fp}\in\big({{\Ass}}(M/\sum_{j=1}^{i-1}a_{j}M)\big)_{>k}$$
and all $i=1,\ldots,n.$ Moreover, if
${\dim} (M/\sum^{n}_{i=1}a_{i}M)>k$, then $a_{1},\ldots,a_{n}$ is
called a \textit{$k$-regular $M$-sequence}.
It easy to see that, any poor \textit{$k$-regular $M$-sequence} is a poor
\textit{$(k+1)$-regular $M$-sequence} and for an ideal $I$ of $R$ if $\big(\Supp(M/IM)\big)_{>k+1}\neq\phi$,
then any \textit{$k$-regular $M$-sequence} in $I$ is a \textit{$(k+1)$-regular $M$-sequence}
in $I$.
( To verify for various $k$, see \cite{AhSan}, \cite{LuTa},
\cite{SchTrCu}, \cite{Ah}, \cite{Nh}, and \cite{BroNh}).


\begin{thm} \label {3.14}
Let $a_{1},a_{2},\ldots,a_{n}$ be a
$k$-regular $M$-sequence in $I$ and  set $\fa:=(a_{1},\ldots,a_{n})$.
If $\big(\big({\Supp}(M)\cap{\W}(\fa,J)\big)
\smallsetminus{\W}(I,J)\big)_{\leq{k}}=\emptyset$,
then $\ \lc^{i}_{I,J}(M)\cong{\ \lc^{i}_{\fa,J}(M)}\ $ for all $i<n$.
In particular for $J=0$.
\begin{proof}
Let $$0{\lo}{E^{0}}^{d^{0}\atop{\longrightarrow}}{E^{1}}^{d^{1}\atop
{\longrightarrow}}\cdots\longrightarrow{E^{i}}^{d^{i}\atop{\longrightarrow}}\cdots$$
be a minimal injective resolution for $M$. For all
$i\in{\mathbb{N}_{0}}$ we have
$$E^{i}=\bigoplus_{{\fp}\in\textmd{Supp}(M)}\mu_{i}
({\fp},M)E(R/{\fp})$$
in which $\mu_{i}({\fp},M)$ is
the $i-th$ Bass number of $M$ at the prime ideal ${\fp}$ of
$R$. Let $i<n$ and
${\fp}\in{\big({\Supp}(M)\cap{\W}(\fa,J)\big)_{>k}}.$
By \cite[Theorem 2.3 (ii)]{AhSan}, since $a_{1}/1,\ldots,a_{n}/1$ is a regular
$M_{\fp}$-sequence, we have

\begin{equation}
\mu_{i}({\fp},M)=0.
\end{equation}
Now, by \cite[Proposition 1.11]{TakYoYo}, we have


\begin{equation}
\Gamma_{I,J}(E^{i})~~=\bigoplus_{{\fp}\in{\Supp}(M)
\cap{\W}(I,J)}\mu_{i}({\fp},M)E(R/{\fp}).
\end{equation}
for all $i<n$. Similarly, for the ideal $\fa$, we get

\begin{equation}\Gamma_{\fa,J}(E^{i})~~=\bigoplus_{{\fp}\in
{\Supp}(M)\cap{\W}(\fa,J)}
\mu_{i}({\fp},M)E(R/{\fp}),
\end{equation}
for all $i<n$. On the other hand by
\cite[Proposition 1.6]{TakYoYo},
${\W}(I,J)\subseteq{\W}(\fa,J)$.
So that by (1), (2), (3) and our assumption,
we get $$\Gamma_{I,J}(E^{i})=\Gamma_{\fa,J}(E^{i}),$$ for all
$i<n$. It therefore follows that
$$\lc^{i}_{I,J}(M)=\lc^{i}_{\fa,J}(M),$$for all
$i<n$, as required.
\end{proof}
\end{thm}




\bibliographystyle{amsplain}

\end{document}